\newtheorem{theorem}{Theorem}
\newtheorem*{theorem*}{Theorem}
\newtheorem{lemma}[theorem]{Lemma}
\theoremstyle{definition}
\theoremstyle{definition}
\theoremstyle{definition}
\def\R{{\mathbb R}}
\def\Chi{{\mathcal X}}
\def\Tau{{\mathcal T}}
\def\Nu{{\mathcal V}}
\def\<{\langle}
\def\>{\rangle}
\begin{document}

\title{Extensions of isometric embeddings of Pseudo-Euclidean metric polyhedra}%

\begin{abstract}
  We extend the results of B.~Minemyer by showing that any indefinite metric polyhedron (either compact or not) with the vertex degree bounded from above admits an isometric simplicial embedding into a Minkowski space of the lowest possible dimension. We provide a simple algorithm of constructing such embeddings. We also show that every partial simplicial isometric embedding of such space in general position extends to a simplicial isometric embedding of the whole space.
\end{abstract}
\keywords{Discrete geometry, isometric simplicial embedding, isometric extension, pseudo-metric polyhedra, metric geometry, Minkowski space}
\subjclass[2010]{51F99}

\author{Pavel Galashin}
\thanks{The first author was partly supported by the Chebyshev Laboratory  (Department of Mathematics and Mechanics, St. Petersburg State University)  under RF Government grant 11.G34.31.0026 and by JSC "Gazprom Neft"}
\address[Pavel Galashin]{Department of Mathematics, Massachusetts Institute of Technology, Cambridge, MA, 02139, United States}
\email[Pavel Galashin]{galashin@mit.edu}

\author{Vladimir Zolotov}
\address[Vladimir Zolotov]{Mathematics and Mechanics Faculty, St. Petersburg State University, Universitetsky pr., 28, Stary Peterhof, 198504, Russia.}
\email[Vladimir Zolotov]{paranuel@mail.ru}

\maketitle

\section{Introduction}

%
%
%

The question about isometrical simplicial embeddings of indefinite metric polyhedra into a Minkowski space was recently considered in \cite{M}. We show that the results from \cite{M} hold for non-compact indefinite metric polyhedra as well, and give an explicit construction.  We also show that every partial simplicial isometric embedding of indefinite metric polyhedra into a Minkowski space such that the images of the vertices are in d-general position extends to a simplicial isometric embedding of the whole space.


\subsection{Definitions}
An \textit{indefinite metric polyhedron} $\Chi$ is a simplicial complex endowed with a bilinear form attached to every simplex. The forms must agree on the intersections of the simplices, and are not assumed to be positive-definite or even non-degenerate.
For any edge ($1$-simplex) of this complex the value of the bilinear form on this edge is called its \textit{squared length}. The squared lengths of the edges can be arbitrary (including negative) real numbers, and these numbers completely determine the bilinear forms.


%
%
%

A \textit{simplicial map} of an indefinite metric polyhedron into a vector space $\R^n$ is a map that is affine on every simplex. 
Every simplicial map is completely determined by the images of the vertices.

A \textit{simplicial isometric map} of an indefinite metric polyhedron into 
a Minkowski space $\R^p_q$ with signature 
$(\underbrace{+,\dots,+}_p,\underbrace{-,\dots,-}_q)$ is a simplicial map such that the bilinear form of every simplex equals to
the pull-back of the inner product of $\R^p_q$ along this map. In particular, 
such maps preserve the squared lengths of edges. Conversely, every simplicial map that preserves squared lengths of edges is 
isometric, because a bilinear form is completely determined by its 
values on the edges of a non-degenerate simplex.



A \textit{simplicial isometric embedding} of a metric polyhedron into a Minkowski space is an injective simplicial isometric map.

For an indefinite metric polyhedron, by $\Nu$ we denote its vertex set. For 
every vertex $v$ of this set we define its \textit{degree} $\deg(v)$ as the 
number of edges of the simplicial complex containing $v$.

In the sequel we restrict our attention only to finite or countable indefinite 
metric polyhedra.


The special case of simplicial isometric map of Euclidean polyhedra into a Euclidean space has been studied in details in \cite{Krat, BZ,Bre,ak}. Note that in general a Euclidean polyhedron does not admit a simplicial isometric map into a Euclidean space. However, every polyhedron can be subdivided in such a way that the resulting polyhedron admits such a map, see, for example, \cite{Zalg}. In contrast, in the pseudo-Euclidean setting one does not need to subdivide the triangulation. In particular, a Euclidean polyhedron (of bounded vertex degree) admits a simplicial isometric embedding into a (non-Euclidean) Minkowski space of an appropriate dimension.   

Now we are ready to cite a theorem from \cite{M}:
\begin{theorem*}[B.~Minemyer, 2012, \cite{M}, Theorem~1.1, Corollary~3.4]
Let $\Chi$ be a compact $n$-dimensional indefinite metric polyhedron with vertex
set $\Nu$. Then
\begin{enumerate}
  \item There exists a simplicial isometric map of $\Chi$ into $\R^d_d$, where $d = \max\{\deg(v)|v \in \Nu\}$.
  \item There exists a simplicial isometric embedding of $\Chi$ into $\R^q_q$, where $q = \max\{d ,2n+1\}$. 
\end{enumerate}
\end{theorem*}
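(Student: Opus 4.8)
The plan is to build the map one vertex at a time, reducing both parts to a single \emph{placement step}: given that some neighbours of a vertex have already been located in $\R^d_d$ (resp.\ $\R^q_q$), position the vertex so as to realize the prescribed squared lengths of the edges joining it to those neighbours. Since a simplicial map that preserves squared lengths of edges is automatically isometric (as recalled above), it suffices to match edge lengths. I would first fix an enumeration $v_1,v_2,\dots$ of $\Nu$, finite in the compact case and countable in general; running the induction along such an enumeration is exactly what lets the same argument cover the non-compact situation. The point of the degree bound is that when $v_m$ is placed its already-positioned neighbours lie among $v_1,\dots,v_{m-1}$ and number at most $\deg(v_m)\le d$, so each placement step involves at most $d$ length constraints.

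The heart of the argument is that these constraints \emph{linearize}. Suppose neighbours $w_1,\dots,w_k$ with $k\le d$ are placed and we seek $p$ with $\langle p-w_j,\,p-w_j\rangle=\ell_j$ for the prescribed squared lengths $\ell_j$. Subtracting the $j=1$ equation from the remaining ones cancels the term $\langle p,p\rangle$ and leaves $k-1$ \emph{linear} equations $\langle p,\,w_j-w_1\rangle=c_j$, together with the single quadratic equation $\langle p-w_1,\,p-w_1\rangle=\ell_1$. If the differences $w_j-w_1$ are linearly independent, which I would guarantee by keeping the images of all vertices in general position, the linear system cuts out an affine subspace $A$ of dimension at least $2d-(k-1)\ge d+1$. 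Any subspace of a signature-$(d,d)$ space of dimension exceeding $d$ contains vectors of both causal types, so the quadratic form is indefinite on the direction space of $A$ and its values over $p\in A$ sweep out all of $\R$. Hence the quadratic equation is solvable on $A$, the placement step succeeds, and carrying it through the enumeration proves part~(1).

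For part~(2) I would run the same scheme in $\R^q_q$ with $q=\max\{d,2n+1\}$ and, at each step, choose the new point $f(v_m)$ \emph{generically} inside the solution variety $V_m=A\cap\{\langle p-w_1,\,p-w_1\rangle=\ell_1\}$. The enlarged signature guarantees $\dim V_m\ge 2q-(k-1)-1\ge q\ge 2n+1$, which is precisely the room a Whitney-type general position argument needs: the requirements that $f(v_m)$ avoid the affine hulls of the finitely many simplices not incident to $v_m$ (each of dimension at most $n$), that it be affinely independent from every relevant collection of already-placed vertices, and that the newly created simplices of the star of $v_m$ meet the earlier simplices only along common faces, are each the avoidance of a proper algebraic subset of $V_m$. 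A nondegenerate quadric of dimension $\ge 2n+1$ lies in no hyperplane, hence in none of these subsets, so a generic choice satisfies all conditions simultaneously, yielding injectivity and the desired simplicial embedding.

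The main obstacle I anticipate lies in this last step: one must verify at once that $V_m$ is genuinely nonempty, which rests on the indefiniteness of the form exactly as in part~(1), \emph{and} that $V_m$ is not swallowed by the union of the forbidden position sets, so that a length-correct choice and an embedding-compatible choice coexist. Making the general-position bookkeeping precise, namely enumerating which affine and algebraic conditions must be avoided at step $m$ and checking that each is a proper subset of the quadric $V_m$, is the delicate part. The dimension count $q\ge 2n+1$ is what ultimately forces these subsets to be proper, while $q\ge d$ is what keeps every $V_m$ nonempty.
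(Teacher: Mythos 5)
Your proposal is correct in substance, but it takes a genuinely different route from the paper's. Both arguments place vertices one at a time and reduce everything to a placement step with at most $d$ (resp.\ $q$) squared-length constraints; the difference is how that step is solved. You linearize only partially: subtracting the first equation leaves $k-1$ linear conditions plus one genuinely quadratic equation, which you solve by noting that every subspace of $\R^d_d$ of dimension exceeding $d$ carries an indefinite form (correct: a semidefinite subspace projects injectively to one of the two coordinate blocks, so has dimension at most $d$) and then applying the intermediate value theorem on the affine solution space $A$. The paper instead linearizes \emph{completely}: writing $\R^d_d=\Sigma\oplus\Delta$ with both summands isotropic gives $\langle v,v\rangle=2\langle P_\Delta(v),P_\Sigma(v)\rangle$, so if the $\Delta$-component of the new point is prescribed in advance, \emph{all} $k$ equations become a nondegenerate linear system in the unknown $\Sigma$-component (Lemma~\ref{lemma:linear}); no quadratic equation and no IVT ever appear. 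This buys two things: the construction is purely linear-algebraic, and general position comes for free, because the prescribed $\Delta$-components are chosen once and for all on the moment curve in $d$-general position, and any affine dependence among images would project to an affine dependence among these points. In your scheme, by contrast, general position must be maintained by a generic choice \emph{inside the quadric} $V_m$, and this is where your write-up is loosest: $V_m$ need not be a nondegenerate quadric (the form restricted to the directions of $A$ can be degenerate, though indefinite), and ``lies in no hyperplane'' is weaker than ``is not covered by the finite union of forbidden affine subspaces.'' What you actually need is that a nonempty level set of an indefinite quadratic function on $A$ is not covered by finitely many affine subspaces of dimension at most $d-1$; this is true (such a level set always has smooth points, near which it is a hypersurface of dimension $\dim A-1\ge d$, and a Baire-type argument finishes), so the gap is fixable rather than fatal — but note that this genericity machinery is already required in your part~(1), not only in part~(2), since affine independence of the previously placed neighbors is exactly what makes your linear system full rank.
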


The dimension of $\R^d_d$ in the first part of the theorem is shown in \cite{M} to be optimal. The idea is that the $1$-skeleton of the standard $d$-dimensional simplex cannot be isometrically embedded into $\R^p_q$ with $p<d$.

This theorem is followed in \cite{M} by two generalizations. One of them 
provides an algorithm to explicitly construct such extensions, and the other one 
applies to non-compact polyhedra. Both of these upgrades result in a significant 
increase of the target space dimension. For non-compact polyhedra, the target 
space is $\R^p_p$, where $p = 2q(d^3-d^2+d+1)$, and $d$ and $q$ are the same as in the theorem.

In this paper we give an elementary proof of an extension theorem for such isometric embeddings of indefinite metric polyhedra. As a corollary, we get completely constructive versions of the theorems from \cite{M} for both compact and non-compact polyhedra, and the dimension of the target Minkowski space always remains optimal.

\subsection{Preliminaries and notation}

We denote an indefinite metric polyhedron by a triple $(\Chi, \Tau, g)$, where $(\Chi,\Tau)$ is the simplicial complex, and $g$ is the function $g:E(\Tau)\to\R$ associating a squared length to every edge. Here $E(\Tau)$ is the set of edges of $\Tau$. 


A \textit{Minkowski space} of signature $(p,q)$ denoted by $\R^p_q$ is a vector space $\R^{p+q}=\{v=(v^+,v^-)|v^+\in \R^p,v^-\in\R^q\}$ endowed with the following pseudoscalar product: 
$$\< v,u\>_{\R_q^p}:=\< v^+,u^+\>_{\R^p}-\< v^-,u^-\>_{\R^q},$$
where $\< \cdot,\cdot\>_{\R^d}$ denotes the standard scalar product in $\R^p$ or $\R^q$. We restrict our attention to Minkowski spaces of signature $(d,d)$ for some $d$.

Let $Q\subset\R^d_d$ be a collection of points. We say that the points of $Q$ are in $d$-general position if and only if any subcollection of at most $d+1$ points of $Q$ forms an affinely independent set.


Let $(\Chi,\Tau,g)$ be an indefinite metric polyhedron with vertex set $\Nu$ 
and let $\Nu'\subset \Nu$ be an arbitrary subset of $\Nu$. A map 
$m:\Nu'\to\R^d_d$ is called a \textit{partial simplicial isometric map} if for 
every simplex $S$ of $\Chi$ whose vertices belong to $\Nu'$, $m$ preserves the 
bilinear form of $S$ . Equivalently, for every edge $l$ with endpoints 
$a,b\in\Nu'$, the squared length of $l$ equals $\<m(a)-m(b),m(a)-m(b)\>_{\R_q^p}$.

For a positive integer $k$, a finite or countable graph is called \textit{$k$-degenerate} if every its subgraph has a vertex of degree at most $k$.  Equivalently speaking, a graph is $k$-degenerate if and only if its vertices can be ordered so that each vertex has at most $k$ neighbors that are earlier in the ordering. Clearly, a graph with maximal degree $d$ is $d$-degenerate.

Similarly, we say that an indefinite metric polyhedron is \textit{$d$-degenerate} if its $1$-dimensional skeleton is a $d$-degenerate graph.

\subsection{Main results}

All the indefinite metric polyhedra are assumed to be countable or finite.

We begin with a constructive version of theorems from \cite{M} for non-compact indefinite metric polyhedra:
\begin{theorem}
\label{thm:cor}
Let $(\Chi,\Tau,g)$ be a (possibly non-compact) $n$-dimensional $d$-degenerate indefinite metric polyhedron. Then
\begin{enumerate}
  \item There exists a simplicial isometric map of $\Chi$ into $\R^d_d$.
  \item In addition, assume that $d\geq 2n+1$. Then there exists a simplicial 
isometric embedding of $\Chi$ into $\R^d_d$.
\end{enumerate}
\end{theorem}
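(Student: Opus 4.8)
The plan is to construct $m$ greedily, one vertex at a time, exploiting $d$-degeneracy to bound the number of constraints imposed when each new vertex is placed. Using the hypothesis, well-order $\Nu$ so that every vertex has at most $d$ neighbours preceding it, and define $m$ by induction along this order, maintaining as an invariant that the points placed so far lie in $d$-general position. This invariant serves two purposes: in part (1) it guarantees that the already-placed earlier neighbours of each new vertex are affinely independent, and in part (2) it will directly yield injectivity. The first vertex, and any vertex with no earlier neighbours, may be placed at an arbitrary point consistent with general position.

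The heart of the argument is the inductive step. Let $v$ be the vertex to be placed, let $p_1,\dots,p_k$ (with $k\le d$) be the images of its already-placed neighbours, and let $\ell_1,\dots,\ell_k$ be the squared lengths of the corresponding edges; we seek $x=m(v)$ with $\<x-p_t,x-p_t\>=\ell_t$ for all $t$. Subtracting the $t$-th equation from the first cancels the term $\<x,x\>$ and leaves the linear conditions $2\<x,p_t-p_1\>=\ell_1-\ell_t-\<p_1,p_1\>+\<p_t,p_t\>$ for $t=2,\dots,k$. Because $p_1,\dots,p_k$ are affinely independent and $\R^d_d$ carries a non-degenerate form, the functionals $\<\,\cdot\,,p_t-p_1\>$ are linearly independent, so these $k-1$ equations define an affine subspace $A\subseteq\R^d_d$ of dimension $2d-(k-1)\ge d+1$. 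It then remains to solve the single quadratic equation $\<x-p_1,x-p_1\>=\ell_1$ on $A$.

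This quadratic solvability is the step I expect to be the crux, and it is exactly where the indefinite signature is indispensable. Writing $x=a_0+w$ for a fixed $a_0\in A$ and $w$ ranging over the direction space $V$ of $A$, the left-hand side becomes $f(w)=\<w,w\>+2\<w,a_0-p_1\>+\<a_0-p_1,a_0-p_1\>$, a quadratic function whose leading part is the form of $\R^d_d$ restricted to $V$. A dimension count shows that in signature $(d,d)$ no subspace of dimension greater than $d$ can be positive- or negative-semidefinite: since $\dim V\ge d+1$, $V$ meets each of the two coordinate subspaces $\R^d$ (on which the form is positive, resp.\ negative definite) in a nonzero vector, hence contains vectors of both positive and negative square. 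Running $w$ along these sends $f$ to $+\infty$ and to $-\infty$, so $f$ is surjective and the level set $H=\{x\in A:f(x)=\ell_1\}$ is a non-empty hypersurface of dimension $\dim A-1\ge d$. Any $x\in H$ places $v$ isometrically.

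Finally, to preserve the invariant I would select $x$ in $H$ outside the affine hulls of all subsets of at most $d$ previously placed points. Each such hull has dimension at most $d-1<\dim H$, and there are only countably many of them, so by a dimension (or measure) argument their union cannot cover $H$; a permissible $x$ therefore exists, and $d$-general position is maintained. This completes the construction and proves part (1). For part (2), the hypothesis $d\ge 2n+1$ is used only at the end: the vertices of any two simplices of $\Chi$ number at most $2(n+1)\le d+1$, so by $d$-general position they are affinely independent, and the images of the two simplices---being faces of one geometric simplex---meet precisely in the image of their common face. Hence the simplicial map is injective, giving the desired isometric embedding into $\R^d_d$. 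The only genuine obstacle is the quadratic solvability step, and it is precisely the indefiniteness of the target that removes it.
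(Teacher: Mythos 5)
Your argument is correct, and it reaches the goal by a genuinely different route than the paper at the crucial step. Both proofs order the vertices by $d$-degeneracy and place them greedily, but the paper's engine (Lemma \ref{lemma:linear}) fixes two complementary isotropic $d$-dimensional subspaces $\Sigma,\Delta\subset\R^d_d$ and exploits the identity $\<v,v\>=2\<P_\Delta(v),P_\Sigma(v)\>$: once the $\Delta$-component of the new point is prescribed, \emph{all} $k\le d$ edge equations become a non-degenerate \emph{linear} system for its $\Sigma$-component. Prescribing the $\Delta$-components to run along a fixed sequence of points of $\Delta$ in $d$-general position (e.g.\ on the moment curve) makes general position of the images automatic and the whole construction an explicit algorithm. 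You instead linearize only $k-1$ of the equations (by subtraction) and solve the one remaining quadratic on the affine subspace $A$ via a signature count plus the intermediate value theorem --- a correct and more elementary use of indefiniteness, but purely existential, so general position must then be recovered by your avoidance argument. That avoidance step is the one place to tighten: you assert that the level set $H\subset A$ is a hypersurface of dimension $\dim A-1$ which countably many affine hulls of dimension at most $d-1$ cannot cover. This is true but deserves a justification: since the leading form of $f$ is indefinite on the direction space of $A$, the nonempty quadric $H$ always contains a regular point of $f$ (either $f$ has no critical points, or all critical points share one value $c'$, and if $\ell_1=c'$ then $H$ is a cone containing non-vertex points), hence $H$ contains a smooth piece of dimension $\dim A-1\ge d$, of positive $(\dim A-1)$-dimensional Hausdorff measure, whereas the countable union of hulls is a null set. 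Your part (2) is the same argument as the paper's Lemma \ref{lemma:emb}. In exchange for these extra checks, your proof avoids the isotropic-splitting machinery entirely; the paper's version pays the upfront cost of choosing $\Sigma$, $\Delta$ and the moment curve, and gets constructivity and automatic general position in return.
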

In Section \ref{sect:algo} we provide a simple algorithm to construct such maps.

Our main theorem states that every partial simplicial isometric map of an
indefinite metric polyhedron with uniformly bounded vertex degrees extends to a simplicial isometric map of the whole 
polyhedron.

\begin{theorem}
\label{thm:main}
  Let $G=(\Chi,\Tau,g)$ be an indefinite $n$-dimensional metric polyhedron 
with vertex set $\Nu$, and let $d:= \max\{\deg(v)|v \in \Nu\}$.
  \begin{enumerate}
    \item Let $\tau':\Nu'\to\R^d_d$ be a partial simplicial isometric map of 
$G$ into a Minkowski space such that the  points $\tau'(\Nu')$ are in 
$d$-general position. Then there exists a simplicial isometric map 
$\tau:\Nu\to\R^d_d$ of $G$ such that $\tau|_{\Nu'}=\tau'$ and the points 
$\tau(\Nu)$ are also in $d$-general position.
    \item If in addition $d\geq 2n+1$, then both $\tau$ and $\tau'$ are 
automatically injective, so $\tau$ is a simplicial isometric embedding.
  \end{enumerate}
 \end{theorem}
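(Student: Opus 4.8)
The plan is to build the extension $\tau$ one vertex at a time, exploiting that the maximal degree is $d$. First I would fix any enumeration $v_1,v_2,\dots$ of $\Nu\setminus\Nu'$ and place the vertices successively, starting from the given points $\tau'(\Nu')$. Throughout the construction I maintain two invariants: every edge both of whose endpoints have already been placed has its squared length correctly realized, and the set of all placed points lies in $d$-general position. Since isometry of a simplicial map is equivalent to preservation of the squared lengths of all edges (as recalled above), these invariants guarantee that the limiting map $\tau$ is a simplicial isometric map extending $\tau'$ with image in $d$-general position. The whole argument thus reduces to a single inductive step: given a new vertex $v$ whose already-placed neighbours $a_1,\dots,a_k$ (with $k\le\deg(v)\le d$) sit at points $p_1,\dots,p_k$, find $x\in\R^d_d$ with $\langle x-p_i,x-p_i\rangle=g(va_i)$ for all $i$ and such that $x$ together with the previously placed points remains in $d$-general position.

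For the isometry constraints I would first linearise. The $k$ quadratic equations $\langle x-p_i,x-p_i\rangle=g(va_i)$ differ pairwise by \emph{linear} equations in $x$; since the $p_i$ form a subset of at most $d\le d+1$ points in $d$-general position, they are affinely independent, so these $k-1$ linear equations are independent and consistent and cut out an affine subspace $A\subseteq\R^d_d$ with $\dim A=2d-(k-1)\ge d+1$. On $A$ the whole system is equivalent to the single remaining quadric $Q=\{x\in A:\langle x-p_1,x-p_1\rangle=g(va_1)\}$. The crucial geometric input is that $Q$ is nonempty and of large dimension. Any subspace of $\R^d_d$ on which the form is semidefinite has dimension at most $d$: a positive-definite part of dimension $a$ is orthogonal to its isotropic complement, which lies in a subspace of signature $(d-a,d)$ and hence is totally isotropic of dimension $\le d-a$, bounding the total by $d$. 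Consequently the direction space of $A$, having dimension $\ge d+1$, carries an indefinite restriction of the form, so the form represents every real number on $A$, $Q$ is a genuine nonempty hypersurface, and $\dim Q=\dim A-1\ge d$.

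It remains to keep $x$ in $d$-general position, which is where I expect the only real work to lie. Adding $x$ can create an affine dependence only inside a subcollection of at most $d+1$ points that contains $x$; by the inductive hypothesis any at most $d$ of the old points are affinely independent, so the forbidden locus for $x$ is exactly the union, over all subsets $T$ of at most $d$ previously placed points, of the affine hulls $\mathrm{aff}(T)$, each an affine subspace of dimension $\le d-1$. There are only countably many such subspaces, and since $\dim Q\ge d$ strictly exceeds $d-1$, no forbidden subspace can contain $Q$; hence each meets $Q$ in a proper algebraic subset of dimension $<\dim Q$, which is of measure zero in the top-dimensional (smooth) part of $Q$. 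A countable union of measure-zero sets cannot exhaust $Q$, so a valid $x$ exists. This closes the induction and proves part (1).

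For part (2) I would argue that $d$-general position already forces injectivity once $d\ge 2n+1$. Any two simplices of the $n$-dimensional complex have together at most $2(n+1)=2n+2\le d+1$ vertices, whose images are therefore affinely independent. By the standard general-position argument this forces the affine images of the two simplices to meet only along the image of their common face, so $\tau$ is injective on the geometric realisation, and the same reasoning applied to the domain of $\tau'$ shows $\tau'$ is injective; both are thus simplicial isometric embeddings. The main obstacle throughout is the single inductive step of the third paragraph, namely simultaneously solving the quadric constraint and dodging the general-position-violating subspaces, which the dimension bound $\dim Q\ge d>d-1$ is precisely designed to overcome.
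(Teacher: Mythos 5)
Your argument is correct, but it takes a genuinely different route from the paper's. Both proofs reduce to the same inductive step (place one new vertex whose $k\le d$ already-placed neighbours $p_1,\dots,p_k$ are affinely independent, while keeping all placed points in $d$-general position), and both dispose of part (2) by the same general-position observation. The difference is in how the placement is done. You linearise the distance equations by subtracting them pairwise, obtaining an affine subspace $A\subset\R^d_d$ of dimension $2d-(k-1)\ge d+1$ on which a single quadratic equation remains; since every semidefinite subspace of $\R^d_d$ has dimension at most $d$, the form is indefinite on the direction space of $A$, so the quadric $Q$ is nonempty of dimension $\dim A-1\ge d$, and you pick a point of $Q$ avoiding the countably many affine hulls of at most $d$ old points by a measure (or Baire category) argument. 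The paper instead keeps all $k$ equations but chooses, for each new vertex, a pair of complementary isotropic $d$-dimensional subspaces $\Sigma_H,\Delta_H$ adapted to the neighbour set (Lemma \ref{lemma:aut}, proved via an explicit Lorentz transformation and QL-decomposition) so that the projections $P_{\Delta_H}(p_i)$ stay affinely independent; once the $\Delta_H$-component $v_0$ of the new point is prescribed, the distance equations become a nondegenerate \emph{linear} system in the $\Sigma_H$-component (Lemma \ref{lemma:linear}), and general position is obtained by choosing $v_0$ off countably many hyperplanes inside the $d$-dimensional space $\Delta_H$. The paper's route buys explicitness: everything is linear algebra, and the avoidance argument happens inside an affine space, where no smoothness or measure questions arise. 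Your route buys economy: you never need Lemma \ref{lemma:aut} or the isotropic-subspace machinery at all.

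The one spot where your write-up is thin is the claim that $Q$ is a ``genuine nonempty hypersurface'' with a smooth part of dimension $\dim A-1$: a level set of a quadratic polynomial can in general be lower-dimensional (e.g.\ $x^2+y^2=0$ in $\R^2$), so indefiniteness must be used again here. A short patch: the critical set of $q(x)=\langle x-p_1,x-p_1\rangle$ restricted to $A$ is $A\cap\bigl(p_1+V^{\perp}\bigr)$, where $V$ is the direction space of $A$, an affine subspace of dimension at most $k-1$; taking $x_0\in Q$ and a hyperbolic plane $P\subset V$, the slice $Q\cap(x_0+P)$ is a conic whose quadratic part has signature $(1,1)$, hence an infinite set, while it meets the critical locus in at most one point. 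So $Q$ contains regular points, near which it is a smooth hypersurface of dimension $\dim A-1\ge d$, and your Hausdorff-measure argument against the countably many $(\le d-1)$-dimensional forbidden subspaces goes through.
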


Theorem \ref{thm:cor} is a special case of Theorem \ref{thm:main} with 
$\Nu'=\emptyset$, but we provide a separate short proof for this important case.


\section*{Acknowledgments}
We are very grateful to our adviser Sergei V. Ivanov for formulating the problem and for all his ideas and advice. We would like to express our gratitude to Barry Minemyer
for his valuable comments on the first version of the paper.


\section{Proof of Theorem \ref{thm:cor}}

A linear subspace $H$ of $\R^d_d$ is called \textit{isotropic} if any vector $h\in H$ has zero squared length, i.e. $\< h,h\>=0$.

Fix two arbitrary complementary isotropic $d$-dimensional subspaces $\Sigma,\Delta$ of $\R^d_d$. This can be easily achieved, for example, by putting
$$\Sigma:=\{(v^+,v^-)|v^+-v^-=0\}$$
and
$$\Delta:=\{(v^+,v^-)|v^++v^-=0\}.$$
We denote by $P_\Sigma$ and $P_\Delta$ the projection operators on these subspaces with respect to the direct sum decomposition $\R^d_d = \Sigma\oplus\Delta$. That is, for any $v\in\R^d_d$,  $P_\Sigma(v)\in\Sigma$, $P_\Delta(v)\in\Delta$ and $P_\Sigma(v)+P_\Delta(v)=v$.

The following lemma is used in the proofs of Theorems \ref{thm:cor} and \ref{thm:main}:
\begin{lemma}
\label{lemma:linear}
Let $\R^d_d = \Sigma\oplus\Delta$, where $\Sigma,\Delta$ are two complementary isotropic $d$-dimensional subspaces  of $\R^d_d$. Let $v_0$ be a point in $\Delta$, let $k\le d$ be a positive integer, and let $u_1,\dots,u_k$ be a set of points  in $\R^d_d$ such that $v_0$, $P_{\Delta}(u_1),\dots,P_{\Delta}(u_k)$ are affinely independent  in $\Delta$. Then for any sequence $c_1,\dots,c_{k}$ of real numbers there is a point $u_0\in \R^d_d$ such that

\begin{enumerate}
  \item for every $1\leq i\leq k$, $\< u_0-u_i,u_0-u_i\>_{\R^d_d}=c_i$,
  \item $P_\Delta(u_0)=v_0.$
\end{enumerate}
\end{lemma}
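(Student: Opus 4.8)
The plan is to reduce the system of quadratic conditions in part (1) to a single system of \emph{linear} equations by exploiting the fact that both $\Sigma$ and $\Delta$ are isotropic. First I would build the unknown point directly in the required form: set $u_0 := s + v_0$, where $s \in \Sigma$ is to be determined. Since $s \in \Sigma$ and $v_0 \in \Delta$, this automatically gives $P_\Delta(u_0) = v_0$, so condition (2) holds for free and only the $k$ equations of (1) remain.

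Next I would decompose each given point as $u_i = \sigma_i + \delta_i$ with $\sigma_i = P_\Sigma(u_i) \in \Sigma$ and $\delta_i = P_\Delta(u_i) \in \Delta$. Then $u_0 - u_i = (s - \sigma_i) + (v_0 - \delta_i)$ splits along $\Sigma \oplus \Delta$. Expanding $\<u_0 - u_i, u_0 - u_i\>_{\R^d_d}$ and using that both $\Sigma$ and $\Delta$ are isotropic annihilates the two ``diagonal'' terms, leaving only the cross term $2\<s - \sigma_i, v_0 - \delta_i\>_{\R^d_d}$. Thus condition (1) becomes $\<s, v_0 - \delta_i\>_{\R^d_d} = \tfrac{1}{2}c_i + \<\sigma_i, v_0 - \delta_i\>_{\R^d_d}$ for each $i$, i.e.\ a system of $k$ inhomogeneous linear equations in the single unknown $s \in \Sigma$.

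It remains to show this linear system is solvable, and this is where the hypotheses enter. I would first record that the bilinear form restricts to a \emph{perfect pairing} $\Sigma \times \Delta \to \R$: any $s \in \Sigma$ pairing to zero with all of $\Delta$ also pairs to zero with all of $\Sigma$ (by isotropy), hence with all of $\R^d_d = \Sigma \oplus \Delta$, forcing $s = 0$ by non-degeneracy of the ambient form; the symmetric statement holds on the $\Delta$ side. Now set $w_i := v_0 - \delta_i = v_0 - P_\Delta(u_i) \in \Delta$. The affine independence of $v_0, P_\Delta(u_1), \dots, P_\Delta(u_k)$ in $\Delta$ is exactly the linear independence of the vectors $w_1, \dots, w_k$. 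Consequently the linear functionals $\phi_i := \<\,\cdot\,, w_i\>_{\R^d_d}$ on $\Sigma$ are linearly independent, since a relation $\sum a_i \phi_i = 0$ would give $\sum a_i w_i = 0$ through the perfect pairing. Because $\dim \Sigma = d \ge k$, the map $\Sigma \to \R^k$, $s \mapsto (\phi_1(s), \dots, \phi_k(s))$, is therefore surjective, so the system admits a solution $s$, and $u_0 = s + v_0$ is the required point.

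The main obstacle is precisely this last step: translating the affine-independence hypothesis into the surjectivity needed to hit \emph{arbitrary} prescribed values $c_1, \dots, c_k$. Everything upstream is a mechanical computation whose only genuine content is the isotropy-driven collapse of the quadratic conditions to linear ones; once that collapse is secured, solvability rests entirely on the perfect pairing between $\Sigma$ and $\Delta$ together with the identification of affine independence in $\Delta$ with linear independence of the $w_i$.
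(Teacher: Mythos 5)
Your proof is correct and follows essentially the same route as the paper's: write $u_0 = v_0 + s$ with $s \in \Sigma$, use isotropy of $\Sigma$ and $\Delta$ to collapse the quadratic conditions to the linear system $2\<s - \sigma_i, v_0 - \delta_i\>_{\R^d_d} = c_i$, and solve it via the non-degenerate pairing between $\Sigma$ and $\Delta$ together with the linear independence of the vectors $v_0 - P_\Delta(u_i)$. If anything, you spell out more carefully than the paper why the resulting rectangular system is solvable (linearly independent functionals on $\Sigma$ give a surjection onto $\R^k$), which the paper compresses into the assertion that the system is ``non-degenerate.''
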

\begin{proof}

It is easy to see that for any vector $v\in\R^d_d$, one has
$$\< v,v\>_{\R^d_d}=2\< P_\Delta(v),P_\Sigma(v)\>_{\R^d_d},$$
because the subspaces $\Delta$ and $\Sigma$ are required to be isotropic. Note that $\< \cdot,\cdot \>_{\R^d_d}|_{\Delta \times \Sigma}$ is a non-degenerate bilinear pairing between $\Delta$ and $\Sigma$.

System (1) is equivalent to the system 
$$2\<P_\Delta(u_0-u_i),P_\Sigma(u_0-u_i)\>_{\R^d_d}=c_i, 1\leq i\leq k.$$
Denote $P_\Delta(u_i)$ by $v_i$ and $P_\Sigma(u_i)$ by $h_i$ for $1\leq i\leq k$. We construct $u_0$ in the form $u_0 = v_0 + h_0$ where $h_0 \in \Sigma$, so the system now takes the form

\begin{equation}
2\<v_0-v_i,h_0-h_i\>_{\R^d_d}=c_i, i=1..k.
\label{ms}
\end{equation}

 We need to find $h_0$, and all the other vectors are fixed.

 Clearly, this is a linear equation system, which is non-degenerate, because the vectors $v_i$ are affinely independent and ${\<\cdot,\cdot\>_{\R^d_d}|}_{\Delta \times \Sigma}$ is a non-degenerate pairing, so the required vector $h_0$ (and, therefore, $u_0$) exists. \end{proof}
%
%
%
%
%

\subsection{The algorithm}
Fix two arbitrary complementary isotropic $d$-dimensional subspaces 
$\Sigma,\Delta$ of $\R^d_d$, and fix an infinite sequence of points 
$v_0,v_1,\dots \in \Delta$ in $d$-general position (for example, these points 
can lie on the Moment Curve). Let  $G=(\Chi, \Tau, g)$ be a $d$-degenerate $n$-dimensional 
indefinite metric polyhedron with vertex set 
$\Nu=\{t_0,t_1,\dots\}$, where each vertex $t_i$ is connected to at most $d$ vertices from the set $\{t_0,\dots,t_{i-1}\}$. We need to find a simplicial isometric map 
$\tau:\Chi\to\R^d_d$ of $G$ into $\R^d_d$.
Since any simplicial map is completely determined by images of vertices, it is enough to define $\tau$ on the vertex set $\Nu$.
First we choose $\tau(t_0)$, then 
$\tau(t_1)$, and so on.

For every $i\geq0$, there are at most 
$d$ points in $\{t_j\}_{j< i}$ connected to $t_i$ by an edge. If this set 
is empty, then put $\tau(t_i)$ to be any point of $\R^d_d$ such that 
$P_\Delta(\tau(t_i))=v_i$. If this set is non-empty, apply Lemma 
\ref{lemma:linear} to its points by solving the system~\eqref{ms} and obtain 
$\tau(t_i)$ such that $P_\Delta(\tau(t_i))=v_i$.
\label{sect:algo}

This completes the first part of Theorem \ref{thm:cor}. Now the second 
part of Theorem \ref{thm:cor} is a consequence of the following 
lemma from \cite{M}:
\begin{lemma}
\label{lemma:emb}
  Let $(\Chi, \Tau, g)$ be an $n$-dimensional metric polyhedron, let $f:\Chi\to 
\R^N$ be a simplicial map with respect to $\Tau$, and let $\Nu$ be the
vertex set of $\Chi$. If $f(\Nu)$ is in $(2n + 1)$-general position then $f$ is 
an embedding.
\end{lemma}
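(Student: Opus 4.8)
The plan is to prove injectivity of the simplicial map $f$ directly from the affine independence guaranteed by $(2n+1)$-general position, using barycentric coordinates and a single affine-dependence argument. Recall that, by the paper's convention, $f(\Nu)$ being in $(2n+1)$-general position means that any at most $2n+2$ of the image points are affinely independent. The key numerical observation is that since $\dim\Chi=n$, every simplex of $\Tau$ has at most $n+1$ vertices, so two simplices together have at most $2(n+1)=2n+2$ vertices; hence the images of their vertices are always affinely independent. This is exactly the threshold the hypothesis provides.

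First I would reduce the claim to a statement about carrier simplices. Every point $x\in\Chi$ lies in the relative interior of a unique simplex $S_x$ and can be written in barycentric coordinates $x=\sum_{v\in V(S_x)}\lambda_v\,v$, where $V(S_x)$ denotes the vertex set of $S_x$, all $\lambda_v>0$, and $\sum_v\lambda_v=1$. Since $f$ is affine on each simplex, $f(x)=\sum_{v\in V(S_x)}\lambda_v f(v)$.

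Next, suppose $f(x)=f(y)$ for points $x,y\in\Chi$ with carrier simplices $S_1=S_x$ and $S_2=S_y$ and barycentric coefficients $\lambda_v$ and $\mu_v$, respectively. The set $V(S_1)\cup V(S_2)$ has at most $2n+2$ elements, so its image under $f$ is affinely independent. Rewriting $f(x)=f(y)$ yields an affine relation among these images whose coefficients sum to $1-1=0$: the coefficient attached to a vertex $v$ equals $\lambda_v-\mu_v$ if $v\in V(S_1)\cap V(S_2)$, equals $\lambda_v$ if $v\in V(S_1)\setminus V(S_2)$, and equals $-\mu_v$ if $v\in V(S_2)\setminus V(S_1)$. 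Affine independence forces every one of these coefficients to vanish. Since all barycentric coordinates are strictly positive, no vertex can lie in exactly one of the two simplices; hence $V(S_1)=V(S_2)$, so $S_1=S_2$, and then $\lambda_v=\mu_v$ for every $v$. Therefore $x=y$, and $f$ is injective; being a simplicial map, it is an embedding.

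I do not expect a serious obstacle: the argument is purely linear-algebraic once the bookkeeping is arranged. The only points demanding care are the vertex count—that two $n$-simplices contribute at most $2n+2$ vertices, matching exactly the affine-independence threshold supplied by $(2n+1)$-general position—and the use of strict positivity of the barycentric coordinates, which is precisely what upgrades the conclusion $V(S_1)=V(S_2)$ from a shared face to genuine coincidence of the carrier simplices.
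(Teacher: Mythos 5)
Your proof is correct and follows essentially the same approach as the paper's: two simplices of an $n$-dimensional complex have at most $2n+2$ vertices in total, so $(2n+1)$-general position makes the images of those vertices affinely independent, which forces injectivity. The paper's proof is a one-line sketch of exactly this idea (phrased for non-intersecting simplices); your barycentric-coordinate bookkeeping writes it out in full, including the case of simplices sharing a face, which the paper's terse version glosses over.
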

\begin{proof}
  Indeed, if the images of two non-intersecting simplices intersect, then their 
vertices cannot be in general position. \end{proof}
If the points $P_\Delta(\tau(t_i))$ lie in $d$-general position then 
the points $\tau(t_i)$ also lie in $d$-general position, so if $d\geq 2n+1$ 
then this lemma implies the desired result. \qed

\section{Proof of Theorem \ref{thm:main}}

\begin{proof}
We assume that $\Nu'$,$\Nu$,$d$,$g$,$\tau'$ are the same as in the 
statement of Theorem \ref{thm:main}. Since our extension can be performed one 
point at a time, it suffices to consider the case $\Nu=\Nu'\cup\{t_0\}$. Let 
$H:=\{v_1,\dots,v_m\}$ be the set of images of vertices adjacent to $t_0$, so $m \le d$. 
By the assumption of Theorem \ref{thm:main}, $v_1,\dots,v_m$ are affinely independent. Let 
$\Sigma$ and $\Delta$ be two fixed complementary isotropic $d$-dimensional 
subspaces of $\R^d_d$, for example, we can again put 
$$\Sigma:=\{(v^+,v^-)|v^+-v^-=0\}$$
and
$$\Delta:=\{(v^+,v^-)|v^++v^-=0\}.$$
If the points 
$P_\Delta(v_1),\dots,P_\Delta(v_m)$ are also affinely independent, 
then we can take almost any $v_0 \in \Delta$ and apply Lemma \ref{lemma:linear} 
to $H$ to obtain the point $\tau(t_0)$ with the required properties. 
Unfortunately, this is not always the case: the points 
$P_\Delta(v_1),\dots,P_\Delta(v_m)$ can easily be affinely dependent even if 
the points $v_1,\dots,v_m$ are not.

But instead of having a fixed pair of $\Sigma$ and $\Delta$, for every new 
vertex we can choose its own pair of isotropic subspaces depending on the set of neighbors of that vertex. And this additional freedom actually allows us to 
apply Lemma \ref{lemma:linear}, because of the following observation:
%

\begin{lemma}
\label{lemma:aut}
  Let $H$ be a set of at most $d+1$ affinely independent points in $\R^d_d$. Then there exist two complementary isotropic $d$-dimensional subspaces $\Sigma_H,\Delta_H$ such that the points of $P_{\Delta_H}(H)$ are also affinely independent.
\end{lemma}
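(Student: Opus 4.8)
Fix a base point $h_0\in H$ and let $W:=\operatorname{span}\{h-h_0\mid h\in H\}\subseteq\R^d_d$ be the direction space of the affine hull of $H$; since the points of $H$ are affinely independent and $|H|\le d+1$, we have $\dim W\le d$. The points $P_{\Delta_H}(H)$ are affinely independent exactly when $P_{\Delta_H}$ is injective on $W$, i.e. when $\ker P_{\Delta_H}\cap W=\Sigma_H\cap W=\{0\}$. So the plan is to produce a $d$-dimensional isotropic subspace $\Sigma_H$ transverse to $W$, together with a complementary isotropic $\Delta_H$. The second subspace is harmless: any maximal ($d$-dimensional) isotropic subspace of the split space $\R^d_d$ admits a complementary maximal isotropic subspace (extend a basis of $\Sigma_H$ to a hyperbolic dual basis), so the whole problem reduces to the existence of a maximal isotropic $\Sigma_H$ with $\Sigma_H\cap W=\{0\}$. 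Note that, since $\Sigma_H$ is isotropic, $\Sigma_H\cap W$ can only consist of null vectors of $W$.

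The first thing I would try is to build $\Sigma_H$ one null vector at a time. Given an isotropic $U_j$ of dimension $j<d$ with $U_j\cap W=\{0\}$, enlarging it requires a null vector $u\in U_j^{\perp}$ with $u\notin U_j+W$; then $U_j+\langle u\rangle$ is again isotropic and still meets $W$ only in $0$. The induced form on $U_j^{\perp}/U_j$ is nondegenerate and indefinite of signature $(d-j,d-j)$, so its null cone spans, and such a $u$ exists precisely when $U_j^{\perp}\not\subseteq U_j+W$. Comparing dimensions, $\dim U_j^{\perp}=2d-j$ while $\dim(U_j+W)\le j+d$, so the extension is automatic as long as $j<d-\tfrac12\dim W$. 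The step I expect to be the \emph{main obstacle} is exactly the tight regime with $j$ close to $d$: there a careless choice of the earlier null vectors can force $U_j^{\perp}\subseteq U_j+W$ and stall the construction, so one has to choose the vectors $u$ genuinely generically rather than merely admissibly.

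To settle the tight regime cleanly I would bypass the greedy step and construct $\Sigma_H$ directly from a Witt decomposition adapted to $W$. Writing $R:=W\cap W^{\perp}$ for the radical of $W$ and $W=R\perp W_1$ with $W_1$ nondegenerate of signature $(a,b)$, I would pick a totally isotropic $R^{*}$ dual to $R$ and orthogonal to $W_1$, force $R^{*}\subseteq\Sigma_H$, and pass to $(R\oplus R^{*})^{\perp}$; a short check shows $\Sigma_H\cap W=\{0\}$ follows from transversality of the $W_1$-part, so this reduces the problem to a \emph{nondegenerate} $W$. For nondegenerate $W$ of signature $(a,b)$ in $\R^d_d=W\perp W^{\perp}$, I would seek $\Sigma_H$ as the graph $\{(\phi(y),y)\mid y\in Y\}$ of a linear map $\phi:Y\to W$ over a $d$-dimensional subspace $Y\subseteq W^{\perp}$: transversality $\Sigma_H\cap W=\{0\}$ is then automatic, and isotropy of $\Sigma_H$ amounts to the single condition that the restriction of $\langle\cdot,\cdot\rangle$ to $Y$ equal $-\phi^{*}\beta_W$, the pullback of $-\beta_W$, where $\beta_W:=\langle\cdot,\cdot\rangle|_W$. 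The construction therefore comes down to a signature count: $W^{\perp}$ has signature $(d-a,\,d-b)$, and because $\dim W=a+b\le d$ it is large enough to contain a $d$-dimensional subspace $Y$ whose restricted form has signature $(b,a)$ and radical of dimension $d-a-b$. Since that form is $-\beta_W$ transported to $Y$, a matching $\phi$ exists. The one genuinely load-bearing inequality is the nonnegativity of the radical dimension, $d-a-b\ge 0$, i.e. $\dim W\le d$ (equivalently $d-a\ge b$ and $d-b\ge a$); this is precisely where the hypothesis $|H|\le d+1$ is used, and without it no maximal isotropic could be transverse to $W$.
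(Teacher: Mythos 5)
Your argument is correct, but it takes a genuinely different route from the paper. You reformulate the conclusion as the existence of a maximal ($d$-dimensional) isotropic subspace $\Sigma_H$ transverse to the direction space $W$ of the affine hull of $H$ (with $\Delta_H$ then supplied by the standard fact that every maximal isotropic in a split space has an isotropic complement), and you build $\Sigma_H$ by pure quadratic-form theory: split off the radical $R$ of $W$ with a dual isotropic $R^{*}$, reduce to nondegenerate $W$ of signature $(a,b)$, and realize $\Sigma_H$ as the graph of an anti-isometry $\phi:Y\to W$ over a $d$-dimensional subspace $Y\subseteq W^{\perp}$ whose restricted form has signature $(b,a)$ and radical of dimension $d-a-b$; the signature count works precisely because $\dim W\le d$, which is where $|H|\le d+1$ enters (and, as you note, transversality is impossible otherwise, so the hypothesis is sharp). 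The paper instead argues in coordinates: it fixes the standard pair $\Sigma,\Delta$ (diagonal and antidiagonal), seeks a Lorentz transformation $f=(\mathrm{id},F^-)$ with $F^-$ orthogonal so that $P_\Delta(fU)=\Delta$, reduces this to $\det(S^+-F^-S^-)\neq 0$ for the coordinate matrices $S^{\pm}$ of a well-chosen basis of $U=\operatorname{span}(H)$, and produces $F^-$ explicitly from a QL-decomposition $-S^-=Q_-L_-$ by taking $F^-=Q_-^T$, making $S^+ - F^-S^- = S^+ + L_-$ lower-triangular with positive diagonal. The paper's proof buys explicitness — an actual matrix one can compute, in keeping with the paper's algorithmic emphasis — at the cost of basis-dependent bookkeeping; your proof buys coordinate-free conceptual clarity, isolates exactly how the dimension hypothesis is used, and generalizes immediately to finding a maximal isotropic transverse to any prescribed subspace of dimension at most $d$ in any split form, but it is not directly effective without unwinding the Witt-decomposition steps into linear algebra. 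Your preliminary greedy sketch (extending an isotropic subspace one null vector at a time) is correctly flagged as potentially stalling near $j=d$ and is superseded by the direct construction, so it does not affect the validity of the final argument.
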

\begin{proof}
The case $|H|<d+1$ clearly follows from the case $|H|=d+1$, so we assume $|H|=d+1$. Without loss of generality we may assume that $H$ contains the origin.

\def\hh{{U}}
Let $\hh$ denote the linear subspace of $\R^d_d$ spanned by the points of $H$. Let $P_+:\R^d_d \to \R^d_d$ denote a projection operator on the positive coordinate component. We may assume that $P_+(U)$ is the coordinate subspace spanned by the vectors $e_1,\dots,e_k$, where  the vectors $e_1,\dots,e_k$ are the first $k$ vectors of the standard basis of the positive coordinate component, otherwise standard basis of the positive coordinate component should be substituted by another one which satisfies this condition. It is possible to choose a basis $u_1,\dots,u_d$ of $U$ such that $P_+(u_i) = e_i$ for $i=1..k$, and $P_+(u_i) = 0$ for $i=k+1..d$.

Let
$$\Sigma:=\{(v^+,v^-)|v^+-v^-=0\}$$
$$\Delta:=\{(v^+,v^-)|v^++v^-=0\}.$$

We are going to find an isometry $f$ of $\R^d_d$, such that the subspaces $\Sigma_H:=f^{-1}\Sigma$ and $\Delta_H:=f^{-1}\Delta$ meet the requirements of the lemma, namely, $f$ has to be such that the points $P_{f^{-1}\Delta}(H)$ are affinely independent, or, equivalently, $P_\Delta(f\hh)=\Delta$.


\def\hh{{U}}


%



Now we explicitly describe the matrix of the desired Lorentz transformation $f$. Let $M^+$ and $M^-$ denote positive and negative components of $\R^d_d$. We are going to find $f$ in the form $f=(f^+,f^-)$, where $f^+:M^+\to M^+$ and $f^-:M^-\to M^-$ are two linear isometries. We put $f^+:=\textrm{id}$. Now we are going to find the matrix $F^-$ of $f^-$ . Let $S$ be the $2d \times d$ matrix whose columns are coordinates of $u_i$, it can be split into two $d \times d$ matrices $S^+$ and $S^-$. The subspace $P_\Delta(f(\hh))$ has dimension $d$ if and only if

$$\det(S^+ - F^-S^-) \ne 0,$$ 
because for any vector $v=(v^+,v^-)$ one has $P_\Delta(v)=\frac{1}{2}(v^+-v^-,v^--v^+)$.

By construction, the matrix $S^+$ has the first $k$ diagonal entries equal to $1$, while all the other entries of $S^+$ are zeroes. Hence the columns of $S^-$ with numbers $k+1, \dots, d$ have to be linearly independent, because $\operatorname{dim} U = d$.

Now we need to make the following standard decomposition of the matrix $S^-$ called \textit{QL-decomposition}, see, for example, \cite[\S5.2]{Golub}. Namely, if $A$ is a $d\times d$-matrix, then there exist an orthogonal matrix $Q$ and a lower-triangular matrix $L$ with non-negative diagonal entries, such that $A=QL$. The proof is a slight modification of the Gram–Schmidt orthogonalization process.  Note that if the last $d-k$ columns of $A$ were linearly independent, then the last $d-k$ diagonal entries of $L$ become strictly positive, because the columns remain linearly independent after the multiplication by $Q$ from the left.

So, let $-S^-=Q_-L_-$ be such QL-decomposition of $-S_-$. Put $F^-:=Q_-^T$, where by $^T$ we denote the matrix transpose. Then $S^+-F^-S^-=S_++L_-$. The matrix on the right-hand side is clearly lower-triangular and has strictly positive diagonal entries, so its determinant is also strictly positive, which concludes the proof of the lemma.\end{proof}



Now we have the following situation: we are given a set $H:=\{u_1,\dots,u_m\}$ of points in 
$\R^d_d$ which are the images of the vertices $t_1,\dots,t_m$ adjacent to $t_0$, so $m \le d$. Then Lemma 
\ref{lemma:aut} provides us with a pair $\Sigma_H,\Delta_H$ of complementary 
isotropic $d$-dimensional subspaces of 
$\R^d_d$ such that the points of $P_{\Delta_H}(H)$ are affinely independent. We 
want to choose a point $v_0$ in such a way that 
the points of $\{v_0\}\cup P_{\Delta_H}(H)$ are affinely independent. If we 
manage to do that, then we can apply Lemma \ref{lemma:linear}, taking each $c_i$ to be the squared length of 
the edge connecting $t_0$ and $t_i$. Lemma \ref{lemma:linear} returns a 
point $u_0$ such that $P_{\Delta_H}(u_0)=v_0$ and if we put $\tau(t_0):=u_0$, 
then $\tau$ will preserve the squared length of every edge. This almost 
completes the proof, but there is also a requirement in the statement of the 
theorem that the points $\tau(\Nu'\cup\{t_0\})$ must be in $d$-general 
position, so we need to be slightly more careful when we choose $v_0$. Namely, 
if for every $d$-tuple of vertices $t_1,\dots,t_d 
\in \Nu$ the point $v_0$ does not lie in the $P_{\Delta_H}$-image of the affine 
span of $\tau(t_1),\dots,\tau(t_d)$, then the points $\tau(\Nu'\cup\{t_0\})$ 
are in $d$-general position. But this forbids $v_0$ to lie in a union of only a 
countable number 
of hyperplanes in $\Delta_H$, therefore such a point $v_0$ exists. Thus, all 
one 
needs to do now is to apply Lemma \ref{lemma:linear} to the set $H$ and extend $\tau$ to the 
vertex $t_0$. \end{proof}

\bibliography{pseudo_euclid}
\bibliographystyle{plain}

\end{document}